\newcommand{\todo}[1]{}
\theoremstyle{plain}
\newtheorem{theorem}{Theorem}
\newtheorem*{proposition*}{Proposition}
\newtheorem*{corollary*}{Corollary}
\newtheorem{lemma}[theorem]{Lemma}
\newtheorem*{theorem*}{Theorem}
\newtheorem*{lemma*}{Lemma}
\newtheorem*{conjecture*}{Conjecture}
\newtheorem*{question*}{Question}
\theoremstyle{definition}
\newtheorem*{exercise*}{Exercise}
\theoremstyle{remark}
\newtheorem*{remark*}{Remark}
\newtheorem{remsTh}[theorem]{Remarks}
\newcommand{\subclass}[1]{}
\newcommand{\enumTi}[1]{\renewcommand{\theenumi}{#1}}
\newcommand{\alphenumi}{\enumTi{\alph{enumi}}}
\newcommand{\romenumi}{\enumTi{\roman{enumi}}}
\renewcommand{\em}{\sl}
\newlength{\algotabbingwidth}
\theoremstyle{plain}
\newcommand{\cop}{c}
\begin{document}

\title[Cops \& Robber on non-orientable surfaces]{A note on the Cops \& Robber game on 
graphs embedded in non-orientable surfaces}%
\author[N.E.~Clarke]{Nancy E.~Clarke}
\address{Nancy E.~Clarke\\
  Department of Mathematics and Statistics\\
  Acadia University\\
  Wolfville, Nova Scotia, Canada}%
\email{nancy.clarke@acadiau.ca}
\author[S.~Fiorini]{Samuel Fiorini$^{1}$}
\address{Samuel Fiorini\\
  D\'epartement de Math\'ematique\\
  Universit\'e Libre de Bruxelles\\
  Brussels, Belgium}%
\email{sfiorini@ulb.ac.be}
\author[G.~Joret]{Gwena\"el Joret$^{1,2}$}
\address{Gwena\"el Joret\\
  D\'epartement d'Informatique\\
  Universit\'e Libre de Bruxelles\\
  Brussels, Belgium}%
\email{gjoret@ulb.ac.be}
\thanks{$^{1}$ This work was supported in part by the Actions de Recherche
Concert\'ees (ARC) fund of the Communaut\'e fran\c{c}aise de
Belgique.}
\thanks{$^{2}$ GJ is a Postdoctoral Researcher of the \textit{Fonds
National de la Recherche Scientifique (F.R.S.--FNRS)}}

\author[D.O.~Theis]{Dirk Oliver Theis$^{3}$}
\address{Dirk Oliver Theis\\
Fakult\"at f\"ur Mathematik \\
Otto-von-Guericke-Universit\"at Magdeburg\\
Magdeburg, Germany }%
\email{dirk.theis@ovgu.de}%
\thanks{$^{3}$ DOT supported by \textit{Fonds National de la Recherche Scientifique
(F.R.S.--FNRS)}}   

\subjclass[2000]{05C99, 05C10; 91A43}
\keywords{Games on graphs, cops and robber game, cop number, graphs on surfaces}



\begin{abstract}
  We consider the two-player, complete information game of Cops and Robber played on undirected, finite, reflexive 
graphs.  
A number of cops and one robber are positioned on
  vertices and take turns in sliding along edges.  The cops win if, after a move, a cop and the robber are on the same vertex.  The minimum number of cops
  needed to catch the robber on a graph is called the cop number of that graph.  

  Let $\cop(g)$ be the supremum over all cop numbers of graphs embeddable in a closed orientable surface of genus $g$, and likewise $\tilde \cop(g)$ for
  non-orientable surfaces.
  It is known (Andreae, 1986) that, for a fixed surface, the maximum over all cop numbers of graphs embeddable in 
this surface is finite.  More precisely,
  Quilliot (1985) showed that $\cop(g) \le 2g + 3$, and Schr\"oder (2001) sharpened this to $\cop(g)\le 
\frac32g + 3$.
  In his paper, Andreae gave the bound $\tilde \cop(g) \in O(g)$ with a weak constant, and posed the question whether 
a stronger bound can be obtained.
  Nowakowski \& Schr\"oder obtained $\tilde \cop(g) \le 2g+1$.

  In this short note, we show $\tilde \cop(g) \leq \cop(g-1)$, for any $g \ge 1$.
  As a corollary, using Schr\"oder's results, we obtain the following: the maximum cop number of graphs embeddable in 
the projective plane is 3; the maximum cop
  number of graphs embeddable in the Klein Bottle is at most 4, $\tilde \cop(3) \le 5$, and $\tilde \cop(g) \le \frac32g + 3/2$ for all other~$g$.
\end{abstract}
\maketitle

For an integer $k\ge 1$, the \textit {Cops and Robber game with $k$ cops} is a pursuit game 
played on a reflexive graph, i.e.~a graph with a
loop at every vertex.
There are two opposing sides, a set of $k$ cops and a single robber. The cops begin the game by each choosing
a (not necessarily distinct) vertex to occupy, and then the robber chooses a vertex.
The two sides move alternately, where a move is to slide along an edge or along a loop. The latter is equivalent to
passing were the game played on a loopless graph.
There is perfect
information, and the cops win if any of the cops and the
robber occupy the same vertex at the same time, after a finite number of moves.
Graphs on which
one cop
suffices to win are called {\it copwin} graphs. In general, we say that a graph $G$ is \textit{$k$-copwin} if $k$ cops can win on $G$. The minimum number of cops that suffice to win on 
$G$ is the cop number of $G$, denoted $c(G)$.
The game has been considered on infinite graphs but, here, we only consider finite graphs.

Nowakowski \& Winkler \cite{NowakowskiWinkler83} and Quilliot \cite{quil} have characterized the class of copwin graphs. The class of $k$-copwin graphs, $k > 1$, has been characterized by Clarke and MacGillivray \cite{meandgary}.
Families of graphs with unbounded cop number have been constructed \cite{AignerFromme84}, even families of $d$-regular graphs, for each $d\ge 3$ \cite{Andreae84}.

By a surface, we mean a closed surface, i.e.~a compact two dimensional topological manifold without boundary.
For any non-negative integer $g$, we denote by $\cop(g)$ the supremum over all $\cop(G)$, with $G$ ranging over all 
graphs embeddable in an orientable surface of
genus~$g$, and we call this the cop number of the surface.  Similarly, we define the cop number $\tilde \cop(g)$ of a non-orientable surface of genus $g$ to be
the supremum over all $\cop(G)$, with $G$ ranging over all graphs embeddable in this surface.

Aigner \& Fromme~\cite{AignerFromme84} proved that the cop number of the sphere is equal to three; i.e.~$\cop(0)=3$.  
Quilliot~\cite{Quilliot85} gave an
inductive argument to the effect that the cop number of an orientable surface of genus $g$ is at most $2g+3$.  Schr\"oder~\cite{Schroder01} was able to sharpen
this result to $\cop(g) \le \frac32 g +3$.  He also proved that the cop number of the double torus is at most 5.

Andreae~\cite{Andreae86} generalized the work of Aigner \& Fromme.  He proved that, for any graph $H$ satisfying a mild connectivity assumption, the class of
graphs which do not contain $H$ as a minor has cop number bounded by a constant depending on $H$.  Using this, 
and the well known formula for the non-orientable
genus of a complete graph, he obtained an upper bound for the cop number of a non-orientable surface of genus $g$, namely
\begin{equation*}
  \tilde\cop(g) \le \binom{ \lfloor  7/2 + \sqrt{6g + 1/4} \rfloor}{2}.
\end{equation*}

In an unpublished note, Nowakowski \& Schr\"oder \cite{NowakowskiSchroder09}, use a series of technically challenging arguments to prove a much stronger bound:
$\tilde\cop(g) \le 2g+1$.

In this short note, we prove the following.

\begin{theorem}\label{thm:copno-surface-genus}
 For a non-negative integer $g$, $\displaystyle \cop(\lfloor g/2 \rfloor) \le \tilde \cop(g) \le \cop(g-1).$
\end{theorem}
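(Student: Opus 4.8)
\textit{Proof plan.} The engine is the orientable double cover. For the non-orientable surface $N_g$ (Euler characteristic $2-g$), the orientable double cover $\pi\colon \widetilde{N_g}\to N_g$ has $\chi(\widetilde{N_g})=2(2-g)=2-2(g-1)$ and is orientable, so $\widetilde{N_g}=S_{g-1}$, the genus-$(g-1)$ orientable surface. Hence if $G$ is a graph embedded in $N_g$, its full preimage $\hat G:=\pi^{-1}(G)$ is a graph embedded in $S_{g-1}$, and $\phi:=\pi|_{\hat G}\colon\hat G\to G$ is a $2$-fold covering map of graphs. I would first prove the following game lemma: \emph{if $\phi\colon\hat G\to G$ is a covering map of simple graphs, then $\cop(G)\le\cop(\hat G)$.} The cops on $G$ simulate a winning $\cop(\hat G)$-cop strategy in a shadow game on $\hat G$: fix any lift $\hat r_0$ of the robber's starting vertex; since $\phi$ is a covering, each robber walk in $G$ lifts uniquely to a walk of the shadow robber, so the cop player always knows the shadow robber's position; the cop player moves the shadow cops by the winning strategy and keeps each real cop at the $\phi$-image of its shadow (a legal move, as $G$ is simple). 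When a shadow cop meets the shadow robber at $\hat v$, the matching real cop is at $\phi(\hat v)$, which is the robber's vertex. (If $\hat G$ is disconnected the shadow robber stays in one component, and only the cops there are needed.)

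\textbf{Upper bound.} Given $G$ embedded in $N_g$, consider $\hat G=\pi^{-1}(G)\subseteq S_{g-1}$. If $\hat G$ is connected, the lemma gives $\cop(G)\le\cop(\hat G)\le\cop(g-1)$, the last step because $\hat G$ embeds in $S_{g-1}$. If $\hat G$ is disconnected, then, being a double cover of the connected graph $G$, it is two disjoint copies of $G$, each embeddable in $S_{g-1}$, so again $\cop(G)\le\cop(g-1)$. Taking the supremum over all such $G$ yields $\tilde\cop(g)\le\cop(g-1)$.

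\textbf{Lower bound.} Put $m=\lfloor g/2\rfloor$. For $g=2m+1$ odd this is immediate: from $N_g=S_m\# N_1$ (connected sum with the projective plane), the surface $N_g$ contains a copy of $S_m$ with an open disc removed, so every graph embeddable in $S_m$ can be pushed off a point and embedded in $N_g$; hence $\cop(m)\le\tilde\cop(g)$.

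\textbf{The even case $g=2m$ is the main obstacle.} Here the embedding trick only places $S_{m-1}$-graphs into $N_{2m}$ — and $N_{2m}$ genuinely contains no punctured copy of $S_m$, since that would force $N_{2m}\cong S_m$ — so one gets only $\cop(m-1)\le\tilde\cop(2m)$, one genus short of the asserted $\cop(m)\le\tilde\cop(2m)$. The route I would pursue is to show that the ``odd'' crosscap is free, namely $\tilde\cop(2m)=\tilde\cop(2m+1)$: given a graph embedded in $N_{2m+1}=N_{2m}\# N_1$, one aims to isotope it so that the extra crosscap is left unused, re-embedding it in a copy of $N_{2m}$ with a disc removed and with no change to $\cop$; combined with the odd case this gives $\cop(m)\le\tilde\cop(2m+1)=\tilde\cop(2m)$. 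The already-known equality $\tilde\cop(1)=\cop(0)=3$ (adding a crosscap to the sphere does not help the robber) is the model case. The delicate point is the graphs that seem to ``fill'' the surface and make every crosscap essential; dealing with those — or, as a fallback, showing that the extremal graphs attaining $\cop(m)$ may be chosen with non-orientable genus at most $2m$, hence embeddable in $N_{2m}$ directly — is where essentially all the remaining work lies.
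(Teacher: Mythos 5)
Your upper bound argument is exactly the paper's: Lemma~\ref{lem:covering-or-nonor} produces the orientable double cover of genus $g-1$, the embedded graph is lifted to the cover, and the cops on $G$ shadow a winning strategy on the lifted graph against an imaginary robber whose position is the lift of the real robber's walk, projecting the prescribed cop moves back down. Your version is if anything more careful than the paper's (you note that the lift may be disconnected, in which case it is two copies of $G$ each embedded in the orientable surface, and you spell out why the shadow robber's position is always known). Your lower bound for odd $g$ is also the paper's: Lemma~\ref{lem:nor-to-or-genuschange} adds a crosscap to the genus-$m$ handle word, giving $\tilde\gamma(G)\le 2\gamma(G)+1$ and hence $\cop(m)\le\tilde\cop(2m+1)$, which is your connected-sum-with-a-projective-plane argument.

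The gap you flag for even $g$ is real, and you should know that the paper does not close it either: its entire derivation of the lower bound is the sentence ``this lemma immediately implies $\cop(g)\le\tilde\cop(2g+1)$, and hence $\tilde\cop(g)\ge\cop(\lfloor g/2\rfloor)$''. That ``hence'' is valid only for odd $g$; for $g=2m$ it yields $\cop(m)\le\tilde\cop(2m+1)$, and since $\tilde\cop$ is non-decreasing in $g$ this says nothing about $\tilde\cop(2m)$. Your skepticism about a purely embedding-theoretic fix is also justified: $\gamma(K_7)=1$ while $\tilde\gamma(K_7)=3$, so $K_7$ embeds in the torus but not in the Klein bottle, and one genuinely cannot re-embed every $S_m$-graph in $N_{2m}$. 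Neither of your two proposed repairs (proving $\tilde\cop(2m)=\tilde\cop(2m+1)$, or choosing extremal graphs of small non-orientable genus) appears in the paper. So you have not missed an idea that the paper contains; what both you and the paper actually establish is $\cop(\lfloor(g-1)/2\rfloor)\le\tilde\cop(g)\le\cop(g-1)$, and the even half of the stated left inequality should be regarded as unproved.
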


This immediately improves the best known upper bound for the non-orientable surface of genus $g$ to 
$\tilde\cop(g) \le \frac32(g-1) + 3 = \frac 32(g+1)$.
The following table gives the new and 
status quo for the concrete upper bounds.

\vspace{-5mm}

\begin{center}
  \bigskip%
  \begin{table}[h]
  \begin{tabular}{lllllllllllllllllll}
    \hline
    N/o genus  & 1 & 2 & 3               & 4 & 5  &  6 &  7 \\
    \hline
    N.~\&~S.~\cite{NowakowskiSchroder09} & 3 & 5 & 7               & 9 & 11 & 13 & 15 \\
    Here   & 3 & 4 & 5\footnotemark  & 7 & 9  & 10 & 12 \\
    \hline
  \end{tabular}
  \bigskip
  \footnotetext{Using Schr\"oder's \cite{Schroder01} result that $\cop(2)\le 5$.}
  \caption{Comparison of the new and status quo upper bounds for $\tilde\cop(g).$}
\label{tab1}
  \end{table}
\end{center}

\vspace{-1.1cm}

We say that a \textit{weak cover} of $H$ by $G$ is a surjective mapping $p\colon V(G) \to V(H)$ which maps 
vertex neighborhoods onto vertex neighborhoods; i.e.~for every vertex $u$ of $G$, we have $p(N(u)) = N(p(u))$.  (This 
terminology lends on the 
classical definition of a ``cover'' without weak, where the restriction to
the vertex neighborhood $p\colon N(u) \to N(p(u))$ is required to be a bijection.)
Using the same technique as for the inequality ``$\le$'' in the proof of Theorem~\ref{thm:copno-surface-genus}, it is possible to show the following:

\begin{lemma}\label{lem:copno-weak-cover}
  If $G$ is a weak cover of $H$, then $\cop(H) \le \cop(G)$.
\end{lemma}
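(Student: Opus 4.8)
The plan is to transfer a winning cop strategy from the cover $G$ down to the base $H$ by a simulation argument: while a game is played on $H$, the cops secretly maintain a ``shadow'' game on $G$. Concretely, I would set $k := \cop(G)$, fix a winning strategy $\sigma$ for $k$ cops on $G$, and have the cops on $H$ carry along shadow positions $\hat c_1,\dots,\hat c_k,\hat r \in V(G)$ subject to the invariant that the true $i$-th cop occupies $p(\hat c_i)$ and the robber occupies $p(\hat r)$. The initialisation is immediate: the cops place their shadow cops as $\sigma$ prescribes and put the $i$-th real cop on $p(\hat c_i)$; when the robber picks a vertex $r$ of $H$, surjectivity of $p$ gives some $\hat r \in p^{-1}(r)$ to start the shadow robber off.

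The inductive step rests on two transfer properties, and the whole point is that each is nothing but the defining identity $p(N(u)) = N(p(u))$ of a weak cover. First, \emph{cop moves project}: if $\sigma$ slides a shadow cop from $\hat c_i$ to a neighbour $\hat c_i'$, then $p(\hat c_i') \in p(N(\hat c_i)) = N(p(\hat c_i))$, so on $H$ the cops may legitimately slide the $i$-th cop from $p(\hat c_i)$ to $p(\hat c_i')$ (or leave it, should the two images coincide). Second, \emph{robber moves lift}: if the robber on $H$ moves from $r = p(\hat r)$ to an adjacent $r'$, then $r' \in N(p(\hat r)) = p(N(\hat r))$, so some neighbour $\hat r'$ of $\hat r$ has $p(\hat r') = r'$, and the cops adopt $\hat r'$ as the new shadow robber position; a ``pass'' by the robber is lifted to a ``pass''. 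Hence every round on $H$ drives one round of a perfectly legal game on $G$ in which the cops respond according to $\sigma$, and the invariant persists.

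To finish, I would invoke that $\sigma$ is winning: after finitely many rounds the shadow game reaches a configuration with $\hat c_i = \hat r$ for some $i$, whence $p(\hat c_i) = p(\hat r)$, i.e.\ the $i$-th real cop sits on the robber's vertex and the cops have won on $H$. (Because $p$ need not be injective, $p(\hat c_i) = p(\hat r)$ may well occur before $\hat c_i = \hat r$, so the real game can end earlier than the shadow game; one simply stops the simulation, which causes no trouble.) Therefore $k$ cops suffice on $H$, giving $\cop(H) \le k = \cop(G)$.

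I do not expect a genuine obstacle here; the only thing to handle carefully is keeping the correspondence straight in its two directions — \emph{project} cop moves from $G$ down to $H$, \emph{lift} robber moves from $H$ up to $G$ — which is precisely the content of the weak-cover hypothesis. This is the same mechanism behind the inequality $\tilde\cop(g) \le \cop(g-1)$ in Theorem~\ref{thm:copno-surface-genus}: there $G$ is the lift of $H$ to the orientable double cover of the non-orientable surface, and the covering projection is, in particular, a weak cover of $H$ by $G$.
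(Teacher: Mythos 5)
Your proof is correct and is exactly the argument the paper has in mind: it explicitly declines to prove Lemma~\ref{lem:copno-weak-cover} on the grounds that the proof is the same simulation technique used for the inequality $\tilde\cop(g)\le\cop(g-1)$ in Theorem~\ref{thm:copno-surface-genus}, namely projecting cop moves from the cover down to the base and lifting robber moves from the base up to the cover. Your two transfer properties isolate precisely where the weak-cover identity $p(N(u))=N(p(u))$ is used, so nothing is missing.
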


This is similar in spirit to the seminal result of Berarducci \& Ingrigila \cite{BerarducciIntrigila93}, saying that if $H$ is a retract of $G$, then the same
inequality holds.  Note, however, that neither of the two notions generalizes the other.  We will not prove Lemma~\ref{lem:copno-weak-cover}; the proof is only
slightly more technical than the geometric proof of Theorem~\ref{thm:copno-surface-genus}.

\section*{Proof}

Familiarity with the classification of combinatorial surfaces is assumed.  See any standard textbook on topology, 
such as~\cite{cain}.  We will make use of the standard representation of
surfaces as quotients of polygonal discs with labelled and directed edges.  Each label occurs twice, and the two edges with the same label are identified
according to their orientations.  Reading the labels of the edges in counterclockwise (i.e.~\textit{positive}) order 
and adding an exponent $-1$ whenever the
orientation of the edges is negative (i.e.~clockwise) gives the \textit{word} of the surface.

For a graph $G$, let $\gamma(G)$ denote the smallest integer $g$ such that $G$ can be embedded in an
orientable surface of genus $g$; similarly define $\tilde\gamma(G)$ as the smallest integer $g$ such that $G$ can be embedded in an non-orientable surface of
genus $g$. For the proof of Theorem 1, we use the following well-known fact. Its proof can be found in~\cite{mohar}.

\begin{lemma}[Folklore]\label{lem:nor-to-or-genuschange}
  For any graph $G$, $\tilde\gamma(G) \le 2\gamma(G) +1$.
\end{lemma}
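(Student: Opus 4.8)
The plan is to take an optimal orientable embedding of $G$ and convert it into a non-orientable embedding by inserting a single crosscap into a part of the surface that the graph avoids. Write $g := \gamma(G)$ and fix an embedding of $G$ into the orientable surface $S$ of genus $g$. Since $G$ is finite and $S$ is a surface, $S \setminus G$ is a nonempty open subset of $S$, so it contains an open disc $D$.

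I would then perform the standard crosscap surgery inside $D$: delete $D$ and glue a M\"obius band along the resulting boundary circle, i.e., replace $S$ by the connected sum $S' := S \# \mathbb{RP}^2$. All the cutting and gluing takes place inside $D$, which is disjoint from $G$, so the embedding of $G$ carries over verbatim to an embedding into $S'$. It remains to identify $S'$. It is non-orientable, being a connected sum involving the projective plane, and
\[
  \chi(S') = \chi(S) + \chi(\mathbb{RP}^2) - 2 = (2-2g) + 1 - 2 = 1-2g .
\]
By the classification of closed surfaces the non-orientable surface of genus $k$ has Euler characteristic $2-k$, so $2-k = 1-2g$ gives $k = 2g+1$; hence $S'$ is the non-orientable surface of genus $2g+1$, and $\tilde\gamma(G) \le 2g+1 = 2\gamma(G)+1$.

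In the word language of the proof section this reads as follows: $S$ has word $a_1 b_1 a_1^{-1} b_1^{-1} \cdots a_g b_g a_g^{-1} b_g^{-1}$, choosing $D$ in the interior of the polygon the surgery appends a block $cc$, and the resulting word --- now containing a like pair --- represents a non-orientable surface whose genus is again read off as $2g+1$ from the Euler-characteristic count above.

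There is no genuinely hard step here; the only two points needing a little care are that a finite graph cannot exhaust a surface, so that the disc $D$ exists, and that the homeomorphism type of $S \# \mathbb{RP}^2$ should be pinned down from orientability together with Euler characteristic via the classification theorem rather than by exhibiting a homeomorphism directly. In particular one does not need Dyck's relation $\mathbb{RP}^2 \# \mathbb{RP}^2 \# \mathbb{RP}^2 \cong T^2 \# \mathbb{RP}^2$; invoking it here would only complicate the count.
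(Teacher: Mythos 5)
Your proof is correct and follows essentially the same route as the paper: insert a single crosscap into the optimal orientable embedding at a spot avoided by the graph, and identify the resulting surface as the non-orientable surface of genus $2g+1$ via the classification theorem. The only cosmetic difference is that the paper places the crosscap at the base point of the standard polygon and normalizes the resulting word, whereas you take a connected sum with $\mathbb{RP}^2$ inside a disc of $S\setminus G$ and pin down the homeomorphism type by orientability plus Euler characteristic; both are sound.
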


In the proof of the inequality $\cop(g) \le \cop(g-1)$, we make use of the 
well-known fact that 
every manifold $X$ has a 2-sheeted covering $X'\to X$ by an orientable manifold.  If $X$ is a
non-orientable surface of genus $g$, it is easy to see that the standard construction 
(again, see~\cite{cain}) yields a surface of genus $g-1$. This is 
Lemma~\ref{lem:covering-or-nonor}. The proof is straightforward (consider Figure 1), and is thus omitted.

\begin{lemma}\label{lem:covering-or-nonor}
  A non-orientable surface of genus~$g$ has an orientable surface of genus $g-1$ as a 2-sheeted covering space.
\end{lemma}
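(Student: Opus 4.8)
The plan is to verify directly, using the standard polygonal representation of surfaces, that the orientable double cover of a non-orientable surface of genus $g$ has genus $g-1$, and the cleanest way to do this is via the Euler characteristic. Recall that a non-orientable surface $X$ of genus $g$ has Euler characteristic $\chi(X) = 2 - g$ (it is the connected sum of $g$ projective planes). Since the covering map $X' \to X$ is $2$-sheeted, it is a local homeomorphism, so we can pull back any CW-structure (or triangulation) of $X$ to one of $X'$ with exactly twice as many cells in each dimension; hence $\chi(X') = 2\chi(X) = 2(2-g) = 4 - 2g$. By hypothesis $X'$ is orientable, so $\chi(X') = 2 - 2h$ where $h = \gamma(X')$ is its genus, giving $2 - 2h = 4 - 2g$, i.e. $h = g - 1$. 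The only remaining point is the \emph{existence} of an orientable double cover that is connected — and this is exactly the orientation double cover: the set of pairs $(x, o)$ where $x \in X$ and $o$ is a local orientation of $X$ at $x$, with the obvious projection and topology. It is a standard fact that this is a $2$-sheeted covering, that it is connected precisely when $X$ is non-orientable, and that it is itself orientable.

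Concretely, I would carry out the following steps. First I recall the orientation double cover $\pi\colon \tilde X \to X$ and note the three facts above (connectedness for non-orientable $X$, orientability of $\tilde X$, and that $\pi$ is a $2$-sheeted covering map); these are quoted from any topology textbook, consistent with the excerpt's stated conventions. Second, I fix a triangulation (or the polygonal CW-structure) of $X$ and lift it along $\pi$; since $\pi$ is a local homeomorphism and $\tilde X$ is compact (it is a closed surface, being a finite cover of a closed surface), each open cell of $X$ is evenly covered, so the lifted complex has $2V$, $2E$, $2F$ cells if $X$ has $V, E, F$. Third, I compute $\chi(\tilde X) = 2V - 2E + 2F = 2\chi(X)$. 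Fourth, I substitute $\chi(X) = 2 - g$ and, using orientability of $\tilde X$ together with the classification of surfaces, $\chi(\tilde X) = 2 - 2\gamma(\tilde X)$, and solve to get $\gamma(\tilde X) = g - 1$.

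I do not expect any genuine obstacle here; the statement is essentially a bookkeeping exercise once one invokes the orientation double cover. The only place to be slightly careful is the claim that $\tilde X$ is \emph{connected}: if $X$ were orientable the orientation cover would split into two copies of $X$, and then the genus count would be wrong. So I would explicitly note that non-orientability of $X$ is used exactly to guarantee connectedness of $\tilde X$ — equivalently, that the associated homomorphism $\pi_1(X) \to \ZZ/2\ZZ$ sending a loop to its orientation-reversal parity is surjective, which holds iff $X$ is non-orientable. Everything else — the local-homeomorphism property, the cell count, and the Euler-characteristic formulas for closed surfaces — is standard and can be cited rather than reproved. Alternatively, and perhaps more in the spirit of the surrounding proofs, one can give a purely combinatorial version: take the polygonal word $w$ of $X$ (which, since $X$ is non-orientable of genus $g$, can be written as $a_1 a_1 a_2 a_2 \cdots a_g a_g$), form the standard connected double cover by gluing two copies of the polygon along their identifications with the orientation twist, and read off that the resulting word is that of the orientable surface of genus $g-1$; but the Euler-characteristic argument is shorter and less error-prone, so that is the route I would take.
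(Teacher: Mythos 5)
Your argument is correct, but it takes a different route from the paper. The paper's ``proof'' is literally a proof by picture: it draws the standard polygon with word $a_1a_1a_2a_2\cdots a_ga_g$ and, next to it, the connected double cover obtained by gluing two copies of that polygon with the orientation twist --- i.e.\ exactly the ``purely combinatorial version'' you sketch and then set aside in your last sentence. You instead quote the existence, connectedness and orientability of the orientation double cover of a non-orientable surface, lift a CW-structure to double the cell counts, and solve $2-2h = 2\chi(X) = 2(2-g)$ for $h=g-1$. Your Euler-characteristic computation is shorter, less error-prone, and makes explicit the one point the picture silently assumes (connectedness of the cover, which is where non-orientability enters); the paper's picture, on the other hand, exhibits the covering concretely, which fits the geometric flavour of the subsequent argument where edges of the embedded graph are lifted along the covering map. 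Either proof suffices for the use made of the lemma, since the later argument only needs \emph{some} 2-sheeted orientable covering of the right genus. No gaps; your proposal is a valid and arguably cleaner substitute for the paper's proof.
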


\begin{figure}[htbp]
\scalebox{0.75}{\input{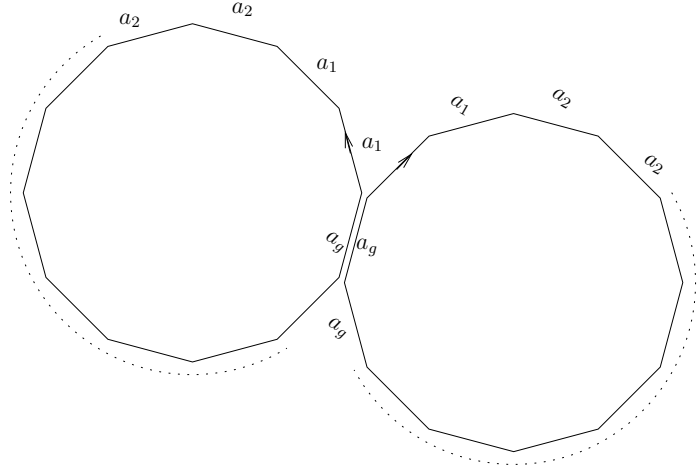}}
\caption{A figure to accompany Lemma 4.}
\label{fig1}
\end{figure}

\medskip

We are now ready for the proof of our main result.

\bigskip

\noindent \textbf{Proof of Theorem 1.}
Lemma 3 immediately implies that $\cop(g) \le \tilde \cop(2g+1)$, and hence $\tilde \cop(g) \ge 
\cop(\lfloor g/2 \rfloor)$.

For the proof of the remaining inequality $\tilde \cop(g) \le \cop(g-1)$, let $X$ be the non-orientable surface of 
genus~$g$ on which a graph $G$ is embedded.  We identify the graph
$G$ with its embedding; i.e.~we think of the vertex set $V(G)$ as a set of points of $X$ and the edge set of $E(G)$ 
as a set of internally disjoint injective
curves connecting the respective end vertices of the edge.
  
By Lemma~\ref{lem:covering-or-nonor}, there exists a covering $p\colon X'\to X$ of $X$ by an orientable surface $X'$ with genus $g' := g-1$.  Consider the graph
$G'$ whose vertex set is $\{p^{-1}(V(G))\}$ and whose edge set consists of the curves obtained by lifting the edges of $G$.  By construction, $G'$ is embedded
in the orientable surface $X'$ of genus~$g'$.

We now give a strategy for $k := \cop(g')$ cops to win the Cops and Robber game on $G$, by ``simulating'' a game on $G'$ and using any winning strategy for $k$ cops
on this graph, who chase an ``imaginary'' robber.  In such a strategy, the $k$ cops first choose their starting 
vertices $u_1,\dots,u_k \in V(G')$.  In the
strategy for $G$, we let the starting vertices be $p(u_1),\dots,p(u_k)$.
Suppose now that, in the game on $G$, the robber chooses a starting vertex $r$.  We choose an arbitrary starting vertex for an 
imaginary robber on $G'$ arbitrarily in the
fibre $p^{-1}(r)$.

Throughout the game, the position of each player in $G'$ will be in the fibre $p^{-1}(x)$ of the position $x$ of the 
corresponding player in $G$.  Moreover,
the movements of the players on $G$ describe curves on $X$, which can be lifted (uniquely, although this is 
not essential) 
to curves on $X'$ forming
walks in $G'$.

Now, whenever it be the cops' turn in any game on $G$, the robber is at a certain vertex $s$ of $G'$, and the $k$ cops are on vertices $v_1,\dots,v_k$.  The
strategy for the cops on $G'$ now prescribes moves for the cops.  The corresponding moves in $G$ are then given as images under $p$.

Since we have a winning strategy, after a finite number of moves, the ``imaginary robber" on $G'$ will be on the same 
vertex as a cop in $G'$.  Consequently, the
same holds on $G$, and thus the cops have won the game on $G$.
\hfill$\Box$

\section*{Conclusion}

We conclude with a conjecture.

\begin{conjecture*}
  For a non-negative integer $g$, $\displaystyle \tilde\cop(g) = \cop(\lfloor g/2 \rfloor)$.
\end{conjecture*}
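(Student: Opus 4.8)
The plan is to take the inequality ``$\ge$'' from Theorem~\ref{thm:copno-surface-genus} for granted and to concentrate on the matching upper bound $\tilde\cop(g)\le\cop(\lfloor g/2\rfloor)$.  It is worth noticing first that for $g\in\{1,2\}$ one has $g-1=\lfloor g/2\rfloor$, so Theorem~\ref{thm:copno-surface-genus} already proves the conjecture in those cases; the first genuinely open instance is $g=3$, where the conjecture asserts $\tilde\cop(3)\le\cop(1)$ and would improve the corresponding table entry from $5$ down to at most $4$ (using $\cop(1)\le 4$).  So the real task is to push the upper-bound argument from orientable genus $g-1$ down to orientable genus $\lfloor g/2\rfloor$.

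The first point to record is that the covering-space method behind $\tilde\cop(g)\le\cop(g-1)$ cannot be squeezed any further on its own: the orientable double cover of the genus-$g$ non-orientable surface has genus exactly $g-1$, and an Euler-characteristic count shows that every connected orientable cover of it, with any finite number of sheets, has genus at least $g-1$; nor is there a covering map in the other direction, from a genus-$g$ non-orientable surface onto a low-genus orientable one.  A reduction of a different nature is therefore needed, and the natural candidate is Lemma~\ref{lem:copno-weak-cover}: it would be enough to prove that \emph{every graph embeddable in the non-orientable surface of genus $g$ admits a weak cover that embeds in the orientable surface of genus $\lfloor g/2\rfloor$.}  Since weak covers demand no local bijectivity on neighborhoods and no condition on faces, there is no obvious parity- or Euler-obstruction to such a statement, and it yields the conjecture immediately through Lemma~\ref{lem:copno-weak-cover}.  (Note that one cannot instead hope to re-embed $G$ itself: already $K_{3,3}$ embeds in the projective plane but not in the sphere, so a genuinely non-trivial weak cover is required.)

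I would attack the odd case $g=2k+1$ first.  Starting from the orientable double cover $G'$ of $G$ — which lives on the genus-$2k$ surface and carries a free involution with quotient the genus-$(2k+1)$ non-orientable surface — one idea is to replace this global symmetrization by a local one: double-cover only a regular neighborhood of $k$ chosen crosscaps, and, for the orientation-reversing loops that still run through the remaining crosscap, splice in a graph gadget emulating the identification while contributing only orientable handles.  One then has to check (i) that the resulting graph really is a weak cover of $G$, so that Lemma~\ref{lem:copno-weak-cover} applies, and (ii) that it embeds in the genus-$k$ orientable surface and nothing larger — in particular that pairing up crosscaps costs exactly one handle per pair.  The even case $g=2k$ should then follow either by the same surgery or by the bookkeeping behind $g\le 2$ (absorb one crosscap ``for free'' and recurse), being careful that $\lfloor g/2\rfloor$ takes the same value for $2k$ and $2k+1$.

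The hard part will be precisely (i)--(ii): manufacturing, from an arbitrary non-orientable embedding, an honest weak cover that embeds in an orientable surface of exactly the claimed genus.  We have no systematic surgery that converts non-orientable flexibility into lower orientable genus in a way that benefits only the cops, and the single lever we do have — the orientable double cover — is tight.  If no general construction emerges, reasonable fallbacks are to settle $g=3$ by hand, combining Schr\"oder's bound $\cop(2)\le 5$ with the structure of graphs embedded in the connected sum of a torus and a projective plane, or to prove the weaker statement $\tilde\cop(g)\le\cop(g-1)-1$ as a first step.  Finally, one should keep in mind that the conjecture may simply be false; disproving it, however, would require a family of graphs on the genus-$g$ non-orientable surface whose cop number exceeds $\cop(\lfloor g/2\rfloor)$, and since the growth rate of $\cop(g)$ itself is at present unknown, such a separation looks out of reach for now.
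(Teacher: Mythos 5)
This statement is left as an open conjecture in the paper: the authors prove only $\cop(\lfloor g/2\rfloor)\le\tilde\cop(g)\le\cop(g-1)$ (Theorem~\ref{thm:copno-surface-genus}) and explicitly do not claim equality, so there is no proof in the paper to compare yours against — and your write-up is, by its own admission, a research plan rather than a proof. Your preliminary observations are correct but not new: that equality already holds for $g\in\{1,2\}$ follows from Theorem~\ref{thm:copno-surface-genus} because $g-1=\lfloor g/2\rfloor$ there, and the fact that the orientable double cover (genus $g-1$) is the best any covering-space argument can do is precisely the remark in the paper's concluding section about liftings of (branched) coverings. The one genuinely new idea — reducing the conjecture, via Lemma~\ref{lem:copno-weak-cover}, to the claim that every graph embeddable in the non-orientable surface of genus $g$ admits a weak cover embeddable in the orientable surface of genus $\lfloor g/2\rfloor$ — is a legitimate reduction in principle, but the reduced statement is left entirely unproved. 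The proposed "double-cover a neighborhood of $k$ crosscaps and splice in a gadget for the remaining one" surgery is not specified, and neither of the two conditions you yourself flag as essential, namely (i) that the result is a weak cover of $G$ and (ii) that it embeds in orientable genus exactly $\lfloor g/2\rfloor$, is verified or even made precise. It is not clear that the reduced statement is any easier than, or even true independently of, the conjecture itself.

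The concrete gap, then, is that no construction is given: every step requiring actual work is deferred ("one then has to check\dots", "if no general construction emerges\dots"), and the fallbacks offered (settle $g=3$ by hand, or prove $\tilde\cop(g)\le\cop(g-1)-1$) are likewise not carried out. As written, the proposal establishes nothing beyond what Theorem~\ref{thm:copno-surface-genus} already gives. If you want to make progress, the most tractable target you identify is $g=3$, i.e., $\tilde\cop(3)\le\cop(1)\le 4$, but even there one would need either an explicit weak-cover construction or a direct cop strategy on the connected sum of a torus and a projective plane, neither of which is sketched in enough detail to assess.
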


One might wonder whether it is possible to improve Theorem~\ref{thm:copno-surface-genus} by taking a different covering, or possibly a branched covering.  This
is impossible: It is a well-known fact that, whenever $p\colon X'\to X$ is a (branched) covering with $X'$ orientable and $X$ non-orientable, then $p$ lifts to
a (branched) covering $\tilde p\colon X'\to\tilde X$, where $\tilde X$ is the orientable double cover constructed in Lemma~\ref{lem:covering-or-nonor}.


\end{document}